\newcolumntype{M}[1]{>{\centering\arraybackslash}m{#1}} 
\newcommand\bp{{\bar\partial}}
\theoremstyle{plain}
\newtheorem{thm}{Theorem}[section]
\newtheorem{lemma}[thm]{Lemma}
\newtheorem{prop}[thm]{Proposition}
\newtheorem{cor}[thm]{Corollary}
\newtheorem{defn}[thm]{Definition}
\theoremstyle{definition}
\newtheorem{example}[thm]{Example}
\newtheorem{remark}[thm]{Remark}
\newcommand{\btheorem}{\begin{thm}}
    \newcommand{\etheorem}{\end{thm}}
\newcommand{\bproposition}{\begin{prop}}
    \newcommand{\eproposition}{\end{prop}}
\newcommand{\bdefinition}{\begin{defn}}
    \newcommand{\edefinition}{\end{defn}}
\newcommand{\bcorollary}{\begin{cor}}
    \newcommand{\ecorollary}{\end{cor}}
\newcommand{\bproof}{\begin{proof}}
    \newcommand{\eproof}{\end{proof}}
\newcommand{\bremark}{\begin{remark}}
    \newcommand{\eremark}{\end{remark}}
\newcommand{\eexample}{\end{example}}
\newcommand{\bexample}{\begin{example}}
\newcommand{\elemma}{\end{lemma}}
\newcommand{\blemma}{\begin{lemma}}
\newcommand{\la}{\langle}
\newcommand{\ra}{\rangle}
\newcommand{\sq}{\sqrt{-1}}
\newcommand{\p}{\partial}
\renewcommand{\bar}{\overline}
\renewcommand{\phi}{\varphi}
\newcommand{\beq}{\begin{equation}}
\newcommand{\eeq}{\end{equation}}
\newcommand{\ee}{\end{eqnarray*}}
\newcommand{\be}{\begin{eqnarray*}}
\newcommand{\bd}{\begin{enumerate}}
    \newcommand{\ed}{\end{enumerate}}
\renewcommand{\hat}{\widehat}
\renewcommand{\bp}{\bar{\partial}}
\newcommand{\C}{{\mathbb C}}
\renewcommand{\P}{{\mathbb P}}
\newcommand{\R}{{\mathbb R}}
\renewcommand{\l}{\ell}
\renewcommand{\#}{\sharp}
\newcommand{\pdd}{\left.\frac{d}{dt}\right|_{t=0}}
\newcommand{\vphi}{\varphi}
\newcommand{\lrp}[1]{\left( #1\right)}
\newcommand{\tr}{\mathrm{tr}}
\setlist[itemize]{leftmargin=*}
\setlist[enumerate]{leftmargin=*}
\numberwithin{equation}{section} 
\title{Conformal extremal metrics and constant  scalar curvature}
\author{Xiaokui Yang}
\author{Kaijie Zhang}
\address{Xiaokui Yang, Department of Mathematics and Yau Mathematical Sciences Center, Tsinghua University, Beijing, 100084, China}
\email{xkyang@mail.tsinghua.edu.cn}
\address{Kaijie Zhang,  Qiuzhen College, Tsinghua University, Beijing, 100084, China}
\email{zhang-kj21@mails.tsinghua.edu.cn}
\begin{document}

    \begin{abstract} Let $M$ be a compact complex manifold of dimension $n\geq 2$. We prove that for any Hermitian metric $\omega$ on $M$, there exists a unique smooth function $f$ (up to additive constants) such that the conformal metric $\omega_g =e^f \omega$ solves the fourth-order nonlinear PDE
        $$\square_g^*(s_g|s_g|^{n-2})=0,$$
        where $s_g$ is the Chern scalar curvature of $\omega_g$, and $\square_g^*$ denotes the formal adjoint of the complex Laplacian $\square_g=\mathrm{tr}_{\omega_g}\sq\p\bar\p$ with respect to $\omega_g$. This equation arises as the Euler-Lagrange equation of the $n$-Calabi functional
        $$C_{n}(\omega_g)=\int |s_g|^n\frac{\omega_g^n}{n!}$$ within the conformal class of $\omega_g$.    Moreover, we show that  the critical metric $\omega_g$ minimizes the $n$-Calabi functional within the conformal class $[\omega]$. In particular,
        if   $\omega_g$ is a Gauduchon metric, then $\omega_g$ has constant Chern scalar curvature.
    \end{abstract}

    \maketitle

    \section{Introduction}

Let $(M,\omega_g)$ be a compact K\"ahler manifold.  The \textit{Calabi functional} is defined as
    $$
    C(\omega_g)=\int s_g^2\frac{\omega_g^n}{n!},$$
    where $s_g$ is the scalar curvature of $\omega_g$.
It is well-known that extremal K\"ahler metrics, introduced by Calabi in the 1980s (\cite{Calabi1982}),
are critical points of the Calabi functional within the class $[\omega_g]\in H^{1,1}_{\bp}(M,\R)$.
A key characterization is that a K\"ahler metric is extremal
if and only if the gradient of its scalar curvature is a holomorphic vector field.
When the manifold admits no nontrivial holomorphic vector fields, extremal metrics
 are exactly K\"ahler metrics with constant scalar curvature (cscK).
 The foundational work of \cite{ACGC2008} further characterizes extremal metrics
 through Hamiltonian 2-forms and stability, linking their existence to geometric and algebraic invariants.\\

    The search for extremal metrics is deeply intertwined with geometric stability.
    For cscK metrics, the Yau-Tian-Donaldson conjecture establishes an equivalence
    between their existence and the algebro-geometric notion of K-stability.
    While extremal metrics similarly relate to a modified stability condition,
    their existence also depends on the vanishing of obstructions such as the Futaki
    invariant (\cite{Futaki1983}), which measures the imbalance of the
    manifold’s automorphism group. For extremal metrics,
    this invariant must vanish along certain holomorphic vector fields, generalizing the
    cscK case. Calabi conjectured that extremal metrics exist precisely when such
    obstructions are overcome, though a full resolution remains open. Examples of extremal metrics abound in symmetric spaces, such as Fubini-Study metrics on projective varieties. However, non-cscK extremal metrics often arise on manifolds with continuous symmetries, such as Hirzebruch surfaces or blow-ups of  $\C\P^2$, as studied in the context of self-dual Einstein-Hermitian structures  and almost-Hermitian geometry (\cite{ Apostolov1999, Apostolov1996, AP09}). These cases illustrate how extremal metrics balance curvature variations with the underlying complex structure, offering a richer landscape than cscK metrics.  Analytically, extremal metrics satisfy a fourth-order nonlinear PDE, making their study more challenging than the second-order Einstein equations. Progress often relies on perturbative methods, geometric flows, or toric geometry techniques in the presence of symmetry. For this comprehensive topic, we refer to \cite{Calabi1985, Chen2009, Chen-Cheng2021a, Chen-Cheng2021b, CDS15, ChenHe2008,Chen-Sun2014, Don99, Don01, Donaldson2008, Futaki1988, LS94, LWZ2018, Ross06, RT06, Stoppa08, Szekelyhidi2007, Szekelyhidi2014, Tian2002, TW2007} and the references therein.\\

    Recent advancements extend these ideas to non-Kähler settings. Works such as \cite{Angella2017, Angella2020, Lejmi2018} address the Chern-Yamabe problem and Chern-Einstein metrics on Hermitian manifolds, while \cite{BL23} explores second Chern-Einstein metrics on almost-Hermitian 4-manifolds. The interplay between Gauduchon curvature and Hermitian geometry, as in \cite{Broder2023, Fu2012, Fu2022,LiuYang2017,WangYang2022,Yang2019TAMS, Yang2020MZ, YangZheng2018, Zheng2019}, further enriches this framework. These developments highlight the broader landscape of scalar curvature geometry, connecting classical K\"ahler theory to  non-Kähler geometric analysis.\\

    In this paper, we generalize Calabi's energy functional to Hermitian manifolds and study the existence of extremal Hermitian metrics with constant Chern scalar curvature--a non-K\"ahler analogue of cscK metrics. Let $(M,\omega_g)$ be a compact Hermitian manifold with complex dimension $n\geq 2$. Let $\omega_g=\sq g_{i\bar j}d z^i\wedge d\bar z^j$ and $s_g$ be the Chern scalar curvature.  For any $p>1$, we define the \emph{$p$-Calabi functional}  as
\beq
    C_{p}(\omega_g)=\lrp{\int |s_g|^p\frac{\omega_g^n}{n!}}\lrp{\int\frac{\omega_g^n}{n!}}^{-\frac{n-p}{n}}.
\eeq
This functional is normalized to ensure scale invariance: $ C_{p}(\lambda\omega_g)=C_p(\omega_g)$ for $\lambda\in \R^+$.
A Hermitian metric $\omega_g$ is called a \textit{$p$-conformal extremal metric} if  it is a critical point of  $C_p$ within its conformal class, satisfying
    \begin{align}
    \pdd C_{p}(e^{tf}\omega_g)=0
    \end{align}
    for any $ f\in C^\infty(M,\R)$. Our first main result establishes the Euler-Lagrange equation characterizing these critical metrics, generalizing Calabi's extremal condition to non-K\"ahler geometry:

\begin{thm}\label{thm1}
    Let $(M,\omega_g)$ be a compact Hermitian manifold. Then $\omega_g$ is $p$-conformal extremal
    if and only if the following equation holds in the sense of distributions
    \beq
    \square_g^*\left(s_g|s_g|^{p-2}\right)=\frac{n-p}{np}\left(|s_g|^p-\intbar|s_g|^p\frac{\omega_g^n}{n!}\right)\label{pextremal}
    \eeq
    where $\square_g^*$ is the formal adjoint of the operator $\square_g=\mathrm{tr}_{\omega_g}\sq\p\bar\p$ on smooth functions.
\end{thm}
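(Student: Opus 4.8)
The plan is to compute the first variation of $C_p$ directly along the conformal path $\omega_t=e^{tf}\omega_g$ and to read off \eqref{pextremal} from the condition that this variation vanish for all $f$. The first ingredient is the behaviour of the Chern scalar curvature and the volume form under a conformal change. Writing $\tilde\omega=e^{u}\omega_g$ so that $\tilde g_{i\bar j}=e^{u}g_{i\bar j}$, one has $\det\tilde g=e^{nu}\det g$, hence $\tilde\omega^n=e^{nu}\omega_g^n$, and the Chern--Ricci form transforms by $\mathrm{Ric}(\tilde\omega)=\mathrm{Ric}(\omega_g)-n\sq\p\bp u$. Taking the trace with $\tilde\omega$, which contributes a factor $e^{-u}$ since $\tilde g^{i\bar j}=e^{-u}g^{i\bar j}$, yields the key pointwise identity $s_{\tilde g}=e^{-u}\lrp{s_g-n\square_g u}$. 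Specializing to $u=tf$ gives $s_t\defeq s_{g_t}=e^{-tf}\lrp{s_g-nt\,\square_g f}$ and $\tfrac{\omega_t^n}{n!}=e^{ntf}\tfrac{\omega_g^n}{n!}$, so that at $t=0$ one has $\pdd s_t=-fs_g-n\square_g f$ and $\pdd \tfrac{\omega_t^n}{n!}=nf\tfrac{\omega_g^n}{n!}$.

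Next I would differentiate $C_p(\omega_t)=A(t)\,V(t)^{-(n-p)/n}$ at $t=0$, where $A(t)=\int|s_t|^p\tfrac{\omega_t^n}{n!}$ and $V(t)=\int\tfrac{\omega_t^n}{n!}$. Since $p>1$, the function $s\mapsto|s|^p$ is $C^1$ with derivative $p|s|^{p-2}s=p|s|^{p-1}\mathrm{sgn}(s)$, which extends continuously by $0$ at $s=0$; because $M$ is compact all the quantities $s_t,\dot s_t,f,\square_g f$ are uniformly bounded for small $t$, so differentiation under the integral sign is justified by dominated convergence, with $\pdd|s_t|^p=p|s_g|^{p-2}s_g\lrp{-fs_g-n\square_g f}$. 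Using $|s_g|^{p-2}s_g^2=|s_g|^p$, a short computation then gives $V'(0)=n\int f\tfrac{\omega_g^n}{n!}$ and
\[
A'(0)=\int\lrs{(n-p)f|s_g|^p-np\,|s_g|^{p-2}s_g\,\square_g f}\tfrac{\omega_g^n}{n!}.
\]

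The crucial step is to integrate by parts in the term containing $\square_g f$. By definition of the formal adjoint relative to the volume $\tfrac{\omega_g^n}{n!}$, one sets $\int|s_g|^{p-2}s_g\,(\square_g f)\tfrac{\omega_g^n}{n!}=\int f\,\square_g^*\lrp{|s_g|^{p-2}s_g}\tfrac{\omega_g^n}{n!}$. Here I must be careful, and I expect this to be the main technical point to phrase correctly: for general $p>1$ (in particular when $1<p<2$) the density $|s_g|^{p-2}s_g$ is merely continuous, not $C^2$, so $\square_g^*\lrp{|s_g|^{p-2}s_g}$ only makes sense as a distribution, defined precisely by the pairing $f\mapsto\int|s_g|^{p-2}s_g\,(\square_g f)\tfrac{\omega_g^n}{n!}$. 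This is exactly the source of the distributional formulation in the statement, and the variational computation delivers it automatically. Assembling the pieces and factoring out the nonzero constant $V(0)^{-(n-p)/n}$, and writing $\bar A=A(0)/V(0)=\int|s_g|^p\tfrac{\omega_g^n}{n!}/\int\tfrac{\omega_g^n}{n!}$ for the average of $|s_g|^p$, the first variation becomes
\[
\pdd C_p(\omega_t)=V(0)^{-\frac{n-p}{n}}\int f\lrs{(n-p)\lrp{|s_g|^p-\bar A}-np\,\square_g^*\lrp{|s_g|^{p-2}s_g}}\tfrac{\omega_g^n}{n!}.
\]
Since this vanishes for every $f\in C^\infty(M,\R)$, the fundamental lemma of the calculus of variations (applied in the distributional sense) forces the bracket to vanish identically, which is precisely \eqref{pextremal} after dividing by $np$. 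The converse direction is immediate, since each step above is an equivalence, so an $\omega_g$ satisfying \eqref{pextremal} makes the displayed first variation vanish for all $f$.
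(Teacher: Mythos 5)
Your proposal is correct and follows essentially the same route as the paper: both compute the first variation of $C_p$ along $\omega_t=e^{tf}\omega_g$ using the conformal formula $s_t=e^{-tf}\lrp{s_g-nt\,\square_g f}$ (the paper's Lemma \ref{lem2.5}, which you rederive via the Chern--Ricci form), then integrate the $\square_g f$ term by parts against $s_g|s_g|^{p-2}$, which is only continuous for $1<p<2$ and hence yields \eqref{pextremal} in the distributional sense exactly as the paper observes. Your explicit dominated-convergence justification and the direct evaluation at $t=0$ (rather than the paper's differentiation at general $t$ followed by setting $t=0$) are only cosmetic differences.
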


\noindent It is straightforward to verify that equation  \eqref{pextremal} constitutes a fourth-order nonlinear partial differential equation governing the metric tensor
$g$ within the framework of Riemannian geometry. However, the current literature offers relatively few comprehensive results on the existence, uniqueness, and regularity of solutions for such equations.  Motivated by these challenges, we restrict our attention to the case $p=n$ where the $n$-conformal extremal metric satisfies a homogeneous equation
    \beq
\square_g^*(s_g|s_g|^{n-2})=0.\label{nextremal}
\eeq This structural simplification permits the application of elliptic regularity theory and geometric energy methods, while the homogeneity condition eliminates certain obstructions arising from lower-order terms, thereby enabling a more tractable analysis of the system’s global behavior.Consequently, we establish that equation \eqref{nextremal} has a unique solution in the conformal class of any Hermitian structure.

    \begin{thm}\label{thm2}
        Let $(M,\omega_g)$ be a compact Hermitian manifold. Then there exists a unique (up to scaling) smooth Hermitian metric $\omega_E$ in the conformal class of $\omega_g$ such that $\omega_E$ is an $n$-conformal extremal metric, i.e., $\omega_E$ satisfies equation \eqref{nextremal}. Moreover, $\omega_E$ minimizes the functional $C_n$ in the conformal class of $\omega_g$, i.e.,
        \beq C_n(\omega_E)=\inf_{f\in C^\infty(M,\R)}C_n(e^f \omega_g). \eeq
            \end{thm}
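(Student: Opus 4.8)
The plan is to exploit the special structure at $p=n$: the conformal factors coming from the curvature and from the volume form cancel exactly, which collapses the geometric minimization into an $L^n$-projection problem. First I would record the conformal transformation of the Chern scalar curvature. Writing $\omega_f=e^f\omega_g$, from $\mathrm{Ric}(\omega_f)=\mathrm{Ric}(\omega_g)-n\sq\p\bar\p f$ together with $\omega_f^n=e^{nf}\omega_g^n$ one obtains
\[ s_{\omega_f}=e^{-f}\bigl(s_g-n\square_g f\bigr). \]
Substituting into $C_n$ and using $|s_{\omega_f}|^n\tfrac{\omega_f^n}{n!}=e^{-nf}|s_g-n\square_g f|^n\,e^{nf}\tfrac{\omega_g^n}{n!}$, the exponentials cancel and
\[ C_n(e^f\omega_g)=\int_M\bigl|s_g-n\square_g f\bigr|^n\frac{\omega_g^n}{n!}. \]
Thus minimizing $C_n$ over the conformal class is exactly minimizing $\|u\|_{L^n}^n$ as $u$ ranges over the affine subspace $s_g-\mathrm{Im}(\square_g)\subset L^n(M,\tfrac{\omega_g^n}{n!})$.

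Second, I would analyze $\square_g$ on the compact manifold. It is a second-order elliptic operator with no zeroth-order term, so by the maximum principle its kernel is exactly the constants, and (being a scalar elliptic operator of index zero) its cokernel is one-dimensional. Hence $\mathrm{Im}(\square_g)$ is the closed hyperplane $\rho^\perp=\{h:\int_M h\,\rho\,\tfrac{\omega_g^n}{n!}=0\}$, where $\rho$ spans $\ker\square_g^*$. The decisive point is that $\rho$ may be taken smooth and strictly positive: since $\square_g 1=0$ and $\square_g$ obeys the maximum principle, its formal adjoint annihilates a positive invariant density (Krein--Rutman; this is also the positivity underlying Gauduchon's theorem on the conformal class of $\omega_g$). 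Consequently the achievable set $s_g-\mathrm{Im}(\square_g)$ is precisely the affine hyperplane $H=\{u\in L^n:\int_M u\,\rho\,\tfrac{\omega_g^n}{n!}=\int_M s_g\,\rho\,\tfrac{\omega_g^n}{n!}\}$.

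Third, I would solve the projection. Since $n\ge 2$, the functional $u\mapsto\|u\|_{L^n}^n$ is strictly convex and coercive and $H$ is a closed affine subspace of the reflexive space $L^n$, so there is a unique minimizer $u_\ast\in H$. Its Euler--Lagrange condition on $H$ is $|u_\ast|^{n-2}u_\ast\in\mathrm{span}(\rho)$, i.e.\ $|u_\ast|^{n-2}u_\ast=c\,\rho$ for a single constant $c$, which is pinned down by the constraint defining $H$ (the constraint is strictly monotone in $c$ because $\int_M\rho^{n/(n-1)}\tfrac{\omega_g^n}{n!}>0$). As $\rho>0$ is smooth, this yields the closed form $u_\ast=\mathrm{sgn}(c)\,|c|^{1/(n-1)}\rho^{1/(n-1)}$, smooth and of constant sign. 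This is where the argument pays off: the explicit formula for $u_\ast$ sidesteps any regularity theory for the degenerate fourth-order equation \eqref{nextremal}.

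Finally, I would recover the metric. Because $s_g-u_\ast\in\rho^\perp=\mathrm{Im}(\square_g)$ and $s_g-u_\ast$ is smooth, the linear elliptic equation $n\square_g f_\ast=s_g-u_\ast$ admits a smooth solution $f_\ast$, unique up to an additive constant. Setting $\omega_E=e^{f_\ast}\omega_g$ gives a smooth metric in $[\omega_g]$ with $C_n(\omega_E)=\|u_\ast\|_{L^n}^n=\inf_f C_n(e^f\omega_g)$, so $\omega_E$ is the asserted minimizer. Uniqueness up to scaling follows since $u_\ast$ is unique, hence $\square_g f_\ast$ is determined, and $\ker\square_g$ consists only of constants (an additive constant in $f_\ast$ being exactly a rescaling of $\omega_E$). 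By Theorem~\ref{thm1} with $p=n$, this critical metric satisfies \eqref{nextremal}, i.e.\ it is $n$-conformal extremal. I expect the main obstacle to be the second step---establishing that $\ker\square_g^*$ is spanned by a smooth strictly positive function $\rho$---since both the Fredholm structure of the non-self-adjoint operator $\square_g$ and the positivity of $\rho$ (the very feature that forces smoothness of $u_\ast$) must be justified with care.
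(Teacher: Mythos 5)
Your proposal is correct, and it reaches the theorem by a genuinely different route than the paper. The paper gauge-fixes at the Gauduchon representative $\omega_G$, uses the adjoint formula \eqref{adjoint} to rewrite \eqref{nextremal} at $\omega_G$ as $(\Delta_{d,G}+\square_G)\phi=0$ for $\phi=(s_G-n\square_Gf)|s_G-n\square_Gf|^{n-2}$, shows by integration by parts (using $\p\bp\omega_G^{n-1}=0$) that $\phi$, hence $s_G-n\square_Gf$, is constant, and then solves the single linear equation $\square_Gf=\frac1n\left(s_G-\intbar s_G\,\omega_G^n/n!\right)$; minimality comes from the separate computation $\frac{d^2}{dt^2}C_n(e^{tf}\omega_g)\geq 0$. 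You instead keep the arbitrary background $\omega_g$, exploit the exact cancellation $C_n(e^f\omega_g)=\int|s_g-n\square_gf|^n\,\omega_g^n/n!$, and recast everything as a strictly convex $L^n$-projection onto the affine hyperplane cut out by $\ker\square_g^*=\mathrm{span}(\rho)$, $\rho>0$ smooth. The two uses of Gauduchon's theorem are equivalent: if $\omega_G=e^{f_G}\omega_g$, then formula \eqref{2.5} gives $\square_g^*\left(e^{(n-1)f_G}\right)=e^{nf_G}\,\square_G^*1=0$, so your $\rho$ is $e^{(n-1)f_G}$, and your closed form $u_*=\mathrm{sgn}(c)|c|^{1/(n-1)}\rho^{1/(n-1)}$ is precisely the paper's conclusion that $s_G-n\square_Gf$ is constant, read in the $\omega_g$-gauge. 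What your route buys: existence and uniqueness of the minimizer come from reflexivity and strict convexity before any PDE is solved, the regularity of the optimal curvature profile is explicit (sidestepping the degenerate fourth-order equation entirely), and the Fredholm/index-zero picture explains structurally why the problem decouples. What the paper's route buys: it is computationally self-contained, needing only Gauduchon's theorem as quoted plus one integration by parts, and it exhibits the extremal metric directly as the solution of one linear elliptic equation.

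Two points to tighten. First, the theorem asserts uniqueness among all $n$-conformal extremal metrics, while your uniqueness argument as written only covers the minimizer. The repair is already implicit in your setup: if $e^f\omega_g$ is merely critical, Theorem \ref{thm1} gives $\square_g^*\left(u|u|^{n-2}\right)=0$ distributionally for $u=s_g-n\square_gf$; since $\square_g^*$ is elliptic with smooth coefficients, $u|u|^{n-2}$ is smooth and lies in $\ker\square_g^*=\mathrm{span}(\rho)$, and the constant is pinned down because $\int u\rho\,\omega_g^n/n!=\int s_g\rho\,\omega_g^n/n!$ holds automatically ($\int\square_gf\cdot\rho\,\omega_g^n/n!=\int f\,\square_g^*\rho\,\omega_g^n/n!=0$). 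Hence every critical point yields the same $u_*$, and uniqueness up to scaling follows as you argue; equivalently, your functional is convex along every line $f+th$, so criticality forces global minimality, which is exactly how the paper handles this. Second, $s_g-\mathrm{Im}\left(\square_g|_{C^\infty}\right)$ is only a dense subset of your hyperplane $H$, not all of it; since your minimizer $u_*$ is smooth and belongs to this dense subset, the infimum over the conformal class does coincide with the minimum over $H$, but the density remark should be stated.
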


        \noindent       Crucially, the fourth-order system \eqref{nextremal} decouples into two compatible second-order equations, whose structural compatibility ensures solvability by elliptic PDE theory.  Within any conformal class
        $\omega_g$, Gauduchon's theorem guarantees a unique metric $\omega_G$ (up to scaling) satisfying $\p\bp\omega_G^{n-1}=0$.  The extremal metric
        $\omega_E$
        solving \eqref{nextremal} and the Gauduchon metric $\omega_G$
        generally diverge; their distinction arises from the former's dependence on fourth-order equation versus the latter's second-order elliptic PDE. A concrete example illustrating this non-identity is provided below.

\begin{example} Let $M=\C\P^2\#\bar{2\C\P^2}$. It is well-known that $M$ has $c_1(M)>0$ and does not admit K\"ahler-Einstein metrics.  Let $\omega_g\in c_1(M)$ be a K\"ahler metric.  Let $\omega_E$ be the Hermitian metric minimizes the $n$-conformal Calabi functional in the conformal class of $\omega_g$. Then $\omega_E$ is not a Gauduchon metric. Otherwise, it is easy to see that $\omega_E=\omega_g$ up to scaling. Moreover, it satisfies
    \beq \square_g^*s_g=0.\eeq
    Since $\omega_g$ is K\"ahler, by maximum principle, $s_g$ is constant. Since $[\omega_g]=c_1(M)$, $\omega_g$ is actually K\"ahler-Einstein. This is absurd.
\end{example}

\noindent
Furthermore, we establish a sharp criterion for
 $\omega_E=\omega_G$: equality holds precisely when the scalar curvature of
$\omega_E$ is (nonzero) constant.

\btheorem\label{thm3} Let $(M,\omega_g)$ be a compact Hermitian manifold and $\omega_E$ be the $n$-conformal extremal metric in the conformal class of $\omega_g$.
\bd \item If  $\omega_E$ is Gauduchon, then it has constant Chern scalar curvature.
\item If  $\omega_E$ has constant nonzero Chern scalar curvature, then $\omega_E$ is Gauduchon.
\ed
\etheorem

\noindent Observe that if  $\omega_g$
is a Hermitian metric with vanishing Chern scalar curvature, it satisfies
$C_n(\omega_g)=0$,  thereby globally minimizing the functional $C_n$. Such a metric constitutes an $n$-conformal extremal metric. However, this metric fundamentally differs from a Gauduchon metric $\omega_G$.

\begin{example} Let $M_1=\C\P^2\#\bar{2\C\P^2}$ and $\omega_1\in c_1(M_1)$ be a K\"ahler metric.  Let $M_2=\Sigma_2$ be a Riemann surface with genus $2$ and $\omega_2$ be the canonical metric with $\mathrm{Ric}(\omega_2)=-\omega_2$. Suppose that $M=M_1\times M_2$ and $\omega_g=2\omega_1+\omega_2$ which is a K\"ahler metric in the class $2c_1(M_1)-c_1(M_2)$. A straightforward computation shows that
    \beq \int_M c_1(M)\wedge \omega_g^2=0. \eeq
    One can see that there exists some $f\in C^\infty(M,\R)$ such that $\omega_E= e^f \omega_g$ has zero Chern scalar curvature (\cite[Theorem~1.2]{WangYang2022}). We claim that $\omega_E$ can  not be Gauduchon. Otherwise, $\omega_E=\omega_g$ up to scaling and so $\omega_g=2\omega_1+\omega_2$ is a K\"ahler metric with zero scalar curvature. This implies that $\omega_1$ has positive constant scalar curvature. This is a contradiction.
\end{example}

While the existence, uniqueness, and regularity of solutions to the $p$-conformal extremal equation \eqref{pextremal} remain unresolved, we establish certain results analogous to those in Theorem \ref{thm3}.

    \begin{thm}\label{thm4}
    Let $(M,\omega_g)$ be a compact Hermitian manifold and $\omega_p$ be a $p$-conformal extremal metric in the conformal class of $\omega_g$ with $p\geq 2$.
    \bd \item
    If $\omega_p$ is Gauduchon and $(n-p)s_p\geq 0$, then it has constant Chern scalar curvature.
    \item
    If $\omega_p$ has constant nonzero Chern scalar curvature, then $\omega_p$ is Gauduchon.
    \ed

\end{thm}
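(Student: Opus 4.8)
The plan is to exploit the algebraic structure of $\square_p^*$ together with the maximum principle, the crucial observation being that the Gauduchon condition is \emph{exactly} the vanishing of the zeroth-order part of $\square_p^*$. Writing $dV_p=\omega_p^n/n!$ and using that $\mathrm{tr}_{\omega_p}(\sq\p\bp u)\,dV_p=\sq\p\bp u\wedge\frac{\omega_p^{n-1}}{(n-1)!}$, two integrations by parts (Stokes, using that the intermediate forms are closed for bidegree reasons) give, for all $u\in C^\infty(M,\R)$,
\beq
\int_M (\square_p u)\,dV_p=\int_M \sq\,\p\bp u\wedge\frac{\omega_p^{n-1}}{(n-1)!}=\int_M u\,\frac{\sq\,\p\bp\,\omega_p^{n-1}}{(n-1)!}.
\eeq
By the defining property of the adjoint, $\square_p^*1$ is therefore the function determined by $(\square_p^*1)\,dV_p=\frac{\sq\,\p\bp\,\omega_p^{n-1}}{(n-1)!}$, so $\square_p^*1=0$ if and only if $\p\bp\,\omega_p^{n-1}=0$, i.e. $\omega_p$ is Gauduchon. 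Since a constant function is annihilated by all derivative terms, $\square_p^*1$ is precisely the zeroth-order coefficient of $\square_p^*$; hence under the Gauduchon hypothesis $\square_p^*=g_p^{i\bar j}\partial_i\partial_{\bar j}+(\text{first order})$ is a second-order elliptic operator with \emph{no} zeroth-order term, to which the maximum principle applies.

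Part (2) is then immediate. If $s_p\equiv c$ with $c\neq 0$, then $\phi:=s_p|s_p|^{p-2}=c|c|^{p-2}$ is a nonzero constant and $|s_p|^p\equiv\intbar|s_p|^p\,dV_p$, so the right-hand side of \eqref{pextremal} vanishes. Thus $c|c|^{p-2}\,\square_p^*1=\square_p^*\phi=0$, and dividing by the nonzero constant $c|c|^{p-2}$ yields $\square_p^*1=0$; by the characterization above $\omega_p$ is Gauduchon.

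For part (1) set $\phi=s_p|s_p|^{p-2}$, which for $p\geq 2$ is a $C^1$ strictly increasing function of $s_p$, and put $A=\intbar|s_p|^p\,dV_p$, a constant. The extremal equation reads $\square_p^*\phi=\frac{n-p}{np}\bigl(|s_p|^p-A\bigr)$, where $\square_p^*$ has no zeroth-order term by the Gauduchon assumption. Evaluating at an interior maximum $x_0$ of $\phi$ (attained by compactness): the first-order terms vanish since $d\phi(x_0)=0$, and $\sq\,\p\bp\phi(x_0)\leq 0$ as a Hermitian form, so $\square_p^*\phi(x_0)=g_p^{i\bar j}\partial_i\partial_{\bar j}\phi(x_0)\leq 0$, giving $\frac{n-p}{np}\bigl(|s_p(x_0)|^p-A\bigr)\leq 0$. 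The sign hypothesis $(n-p)s_p\geq 0$ now fixes the geometry of the extremum. If $p<n$ then $s_p\geq 0$, so $\phi$ and $|s_p|^p$ are maximized together and $|s_p(x_0)|^p=\max_M|s_p|^p\geq A$, while the displayed inequality (with $\frac{n-p}{np}>0$) forces $|s_p(x_0)|^p\leq A$; hence $\max_M|s_p|^p=A$. If $p>n$ then $s_p\leq 0$, so the maximum of $\phi$ is where $|s_p|$ is smallest, giving $|s_p(x_0)|^p=\min_M|s_p|^p\leq A$, while the displayed inequality (now with $\frac{n-p}{np}<0$) forces $|s_p(x_0)|^p\geq A$; hence $\min_M|s_p|^p=A$. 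In both cases a continuous function whose maximum (respectively minimum) equals its average is constant, so $|s_p|^p\equiv A$, and the fixed sign of $s_p$ then gives that $s_p$ is constant. Finally, if $p=n$ the right-hand side vanishes identically, so $\square_p^*\phi\equiv 0$ and the strong maximum principle forces $\phi$, hence $s_p$, to be constant.

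The main obstacle is the first step: establishing that the Gauduchon condition is equivalent to $\square_p^*1=0$, i.e. to the vanishing of the zeroth-order part of $\square_p^*$, since this is precisely what removes the sign-indefinite zeroth-order term and legitimizes the maximum principle. A secondary technical point is regularity at the zero set of $s_p$, where $s|s|^{p-2}$ may fail to be $C^2$ when $2\leq p<3$; this is harmless here because the sign hypothesis forces the relevant extremum to occur where $s_p\neq 0$, so $\phi$ is smooth in a neighborhood and the pointwise argument is valid, while the degenerate alternative $s_p\equiv 0$ makes the conclusion trivial.
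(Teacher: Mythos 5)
Your part (2) is correct and matches the paper in substance: the paper reads the zeroth-order coefficient $-\sq\bp^*\p^*\omega_p$ off its adjoint formula \eqref{adjoint} and concludes $\p\bp\omega_p^{n-1}=0$; your Stokes computation showing $(\square_p^*1)\,\frac{\omega_p^n}{n!}=\frac{\sq\,\p\bp\,\omega_p^{n-1}}{(n-1)!}$ is a clean, self-contained equivalent. For part (1) you take a genuinely different route: the paper never localizes. It multiplies \eqref{pextremal} by $|s_p|^p$, integrates by parts, and exploits the Gauduchon hypothesis through $\int\square_p\lrp{s_p|s_p|^{2p-2}}\frac{\omega_p^n}{n!}=0$ to arrive at a signed identity equating $\frac{(n-p)^2}{np}\int\lrp{|s_p|^p-\intbar|s_p|^p\frac{\omega_p^n}{n!}}^2\frac{\omega_p^n}{n!}$ with $-p(p-1)(n-p)$ times an integral whose integrand has the sign of $(n-p)s_p$; both sides must then vanish. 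Your pointwise maximum principle is more transparent and avoids that bookkeeping, but it pays a price exactly where you try to wave it away: regularity of $\phi=s_p|s_p|^{p-2}$ on the zero set of $s_p$.

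The concrete gap is your closing claim that ``the sign hypothesis forces the relevant extremum to occur where $s_p\neq 0$.'' This is false in the branch $p>n$: there $s_p\le 0$, $\phi=-|s_p|^{p-1}$, and the maximum of $\phi$ sits at the \emph{minimum} of $|s_p|$, which may perfectly well be a zero of $s_p$ (the hypothesis only gives $s_p\le 0$, not $s_p<0$). Since $F(t)=t|t|^{p-2}$ has $F''(t)=(p-1)(p-2)\,\mathrm{sgn}(t)|t|^{p-3}$, the dangerous window is $2<p\le 3$ (your range ``$2\le p<3$'' is off at both ends: at $p=2$ one has $\phi=s_p$, smooth, while at $p=3$ the second derivative jumps), realized when $n=2$ and $p\in(2,3]$; at such a point $\sq\,\p\bp\phi(x_0)$ need not exist and your evaluation is unjustified. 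The repair stays entirely within your scheme: for $p>n$ evaluate at the \emph{minimum} $x_1$ of $\phi$ instead, which sits at the maximum of $|s_p|$ and hence at a point with $s_p\neq 0$ unless $s_p\equiv 0$ (trivial case). There $\square_p^*\phi(x_1)\ge 0$ gives $\frac{n-p}{np}\lrp{|s_p(x_1)|^p-A}\ge 0$, i.e.\ $\max_M|s_p|^p\le A$, and ``maximum equals average'' concludes as before. (Alternatively one can invoke $L^q$ elliptic regularity, giving $\phi\in W^{2,q}_{\mathrm{loc}}$ from the distributional equation, plus Bony's strong maximum principle for strong solutions, but that is heavier than needed.) A smaller point of the same kind: in your $p=n$ subcase, $\phi$ is a priori only $C^1$ for $2<n+0\le$ small $n$ with zeros of $s_p$, so you should note that ellipticity of $\square_p^*$ with smooth coefficients upgrades the distributional solution of $\square_p^*\phi=0$ to a smooth one before the strong maximum principle is applied; with these two corrections your argument is complete.
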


\noindent We also obtain a  criterion for $p$-conformal extremal metrics, which is analogous to the classical result in the K\"ahler setting.
\bcorollary \label{cor1} Let $(M,\omega_g)$ be a compact Hermitian manifold. If $\omega_g$ is Gauduchon and has constant Chern scalar curvature, then it is $p$-conformal extremal for any $p>1$.
\ecorollary

 \noindent \textbf{Acknowledgements.} The first named  author thanks  Valentino Tosatti and Shing-Tung Yau for insightful comments.

\vskip 2\baselineskip

    \section{Preliminaries}

    Let $(M,\omega_g)$ be a compact Hermitian manifold with complex dimension $n$. For any $f\in C^\infty(M,\R)$, the complex Laplacian $\square_g f$ of $f$ is defined as
    \beq
    \square_g f:=\tr_{\omega_g}\left(\sq\p\bp f\right).
    \eeq

    \blemma \label{lem1}  Let $f\in C^\infty(M,\R)$.

    \bd \item One has  \beq  \bp^*(f\omega_g)=f\bp^*\omega_g+\sq\p f,\ \ \  \sq\la\bp f,\p^*\omega_g\ra=\tr_{\omega_g}\sq\p\bp f+\bp^*\bp f. \label{c1}\eeq

    \item  For any $(1,0)$ form $\eta$,  \beq \p^*(f\eta)=f\p^*\eta-\la \eta,\p
    f\ra.\label{c5}\eeq
    \ed

    \bproof For any smooth $(1,0)$-form $\eta\in \Gamma(M,T^{*1,0}M)$,
    we have the global inner product \be
    \left(\bp^*(f\omega_g),\eta\right)=\left(f\omega_g,\bp\eta\right)
    =\left(\omega_g,
    f\bp\eta\right)&=&\left(\omega_g,\bp(f\eta)\right)-\left(\omega_g,\bp f\wedge
    \eta\right)\\
    &=&\left(f\bp^*\omega_g,\eta\right)-\left(\omega_g,\bp f\wedge
    \eta\right)\\
    &=&\left(f\bp^*\omega_g,\eta\right)+\sq\left(\p f, \eta\right) \ee
    where the last identity follows from the fact that $f$ is real
    valued. The second part of $(1)$ can be proved in a similar way.  For any smooth function $\phi\in
    C^\infty(M,\R)$, we have  \be
    \left(\p^*(f\eta),\phi\right)&=&\left(f\eta,\p\phi\right)=\left(\eta,
    f\p\phi\right)=\left(\eta, \p(f\phi)-\p f\cdot \phi\right)\\
    &=&\left(f\p^*\eta,\phi\right)-\left(\la\eta, \p f\ra,
    \phi\right)\ee and we obtain (\ref{c5}). \eproof \elemma

    \begin{lemma}The formal adjoint $\square_g^*$ of $\square_g$ is given by
        \beq
        \square_g^*u=\left(-\sq\bp^*\p^*\omega_g\right)\cdot u-\Delta_d u-\square_g u, \label{adjoint}
        \eeq
        where $u\in C^\infty(M,\R)$ and  $\Delta_d u=d^*d u=\bp^*\bp u+\p^*\p u$.
    \end{lemma}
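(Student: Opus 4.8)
The plan is to compute $\square_g^*$ directly from its defining property $(\square_g f, u)=(f,\square_g^* u)$, where $(\cdot,\cdot)$ denotes the global $L^2$-pairing and $f,u$ range over real-valued smooth functions. The natural starting point is the second identity in \eqref{c1} of Lemma \ref{lem1}, which (using $\tr_{\omega_g}\sq\p\bp f=\square_g f$) rewrites the complex Laplacian as
$$\square_g f=\sq\la\bp f,\p^*\omega_g\ra-\bp^*\bp f.$$
Pairing against $u$ then splits the computation into two pieces that I would handle separately.

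The term coming from $-\bp^*\bp f$ is immediate: since $\bp^*\bp$ is self-adjoint, $(\bp^*\bp f,u)=(f,\bp^*\bp u)$. The term $\sq(\la\bp f,\p^*\omega_g\ra,u)$ carries the real content. Using that $u$ is real I would absorb it into the pointwise Hermitian pairing and recognize the result as a global inner product of $(0,1)$-forms, $\sq(\bp f,u\,\p^*\omega_g)$; moving $\bp^*$ onto $f$ gives $\sq(f,\bp^*(u\,\p^*\omega_g))$. Applying the $\bp^*$-analogue of the product rule \eqref{c5} (established exactly as in the proof of Lemma \ref{lem1}), namely $\bp^*(u\,\eta)=u\,\bp^*\eta-\la\eta,\bp u\ra$ with $\eta=\p^*\omega_g$, produces the term $(-\sq\,\bp^*\p^*\omega_g)\,u$ together with a residual pairing $\sq\la\p^*\omega_g,\bp u\ra$.

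The decisive step is to re-express this residual pairing, which is the complex conjugate of $\la\bp u,\p^*\omega_g\ra$. I would feed it back into identity \eqref{c1} applied with $u$ in place of $f$, using that $\square_g u$ is real and that complex conjugation interchanges $\bp^*\bp u$ with $\p^*\p u$ for real $u$; this turns the residual pairing into $-\square_g u-\p^*\p u$. Collecting the three contributions and combining $\p^*\p u+\bp^*\bp u=\Delta_d u$ then yields the asserted formula $\square_g^* u=(-\sq\,\bp^*\p^*\omega_g)\,u-\Delta_d u-\square_g u$.

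The hard part will not be a single estimate but the careful bookkeeping of the factors of $\sq$ and of complex conjugation as scalars are moved across the $L^2$-pairing, which is conjugate-linear in its second slot. The appearance of the full $d$-Laplacian $\Delta_d u$ (rather than merely $\bp^*\bp u$) and of the extra copy of $\square_g u$ is precisely an artifact of this conjugation step. As a consistency check one notes that the coefficient $-\sq\,\bp^*\p^*\omega_g$ is automatically real, since $d^*d^*=0$ forces $\p^*\bp^*=-\bp^*\p^*$, whence $\overline{\bp^*\p^*\omega_g}=\p^*\bp^*\omega_g=-\bp^*\p^*\omega_g$.
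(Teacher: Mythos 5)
Your proposal is correct and takes essentially the same route as the paper's proof: both start from the identity $\square_g u=\sq\la\bp u,\p^*\omega_g\ra-\bp^*\bp u$ from \eqref{c1}, move $\bp^*$ across the $L^2$-pairing, apply the $\bp^*$-analogue of the product rule \eqref{c5} to $\bp^*(u\,\p^*\omega_g)$, and convert the residual term $\la\p^*\omega_g,\bp u\ra$ via the complex conjugate of \eqref{c1}, which is exactly where the extra $\square_g u$ and the full $\Delta_d u$ arise. Your closing observation that $-\sq\,\bp^*\p^*\omega_g$ is automatically real is a nice consistency check that the paper leaves implicit.
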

    \begin{proof} By Lemma \ref{lem1}, we have
        \beq \square_g u=\sq\la\bp u,\p^*\omega_g\ra-\bp^*\bp u.\eeq
        \noindent
        For any $v\in C^\infty(M,\R)$, one has
        \beq (\square_g u,v)=\left(\sq\la\bp u,\p^*\omega_g\ra-\bp^*\bp u,v\right)\\
        =\sq(\bp u,v\p^*\omega_g)-(u,\bp^*\bp v). \eeq
        Moreover,
        \beq \sq(\bp u,v\p^*\omega_g)=(u,-\sq\bp^*(v\p^*\omega_g))\eeq
        and by using Lemma \ref{lem1} again, we obtain
        \beq \bp^*(v\p^*\omega_g)=v\bp^*\p^*\omega_g -\la\p^*\omega_g,\bp v\ra=v\bp^*\p^*\omega_g-\sq(\tr_{\omega_g}\sq\p\bp v+\p^*\p v).\eeq
        This gives formula \eqref{adjoint}.
    \end{proof}

        \begin{lemma}\label{lem2.5}
        Let $(M,\omega_g)$ be a Hermitian manifold and  $f\in C^\infty(M,\R)$. Let $s_f$ be the Chern scalar curvature of $\omega_f=e^f\omega_g$. Then
        \beq\label{2.3}
        s_f=e^{-f}\lrp{s_g-n\square_g f}
        \eeq
        where $s_g$ is the Chern scalar curvature of $\omega_g$.
    \end{lemma}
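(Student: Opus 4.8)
The plan is to reduce the statement to the standard local formula for the Chern scalar curvature in terms of the determinant of the metric tensor, under which a conformal rescaling becomes a simple additive shift inside a logarithm.

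First I would recall that the Chern--Ricci form of $\omega_g=\sq g_{i\bar j}\,dz^i\wedge d\bar z^j$ is $\mathrm{Ric}(\omega_g)=-\sq\,\p\bp\log\det(g_{i\bar j})$ and that the Chern scalar curvature is its $\omega_g$-trace, namely $s_g=\tr_{\omega_g}\mathrm{Ric}(\omega_g)$. This reduces the problem to tracking how $\log\det(g_{i\bar j})$ and the trace operator transform under the conformal change, and isolates the two pieces of data that carry the $f$-dependence.

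Next I would pass to the conformal metric. In local holomorphic coordinates $\omega_f=e^f\omega_g$ means $(g_f)_{i\bar j}=e^f g_{i\bar j}$, so the $n\times n$ Hermitian determinant satisfies $\det(g_f)=e^{nf}\det(g)$, and therefore $\log\det(g_f)=nf+\log\det(g)$. Applying $-\sq\,\p\bp$ to both sides yields the transformation law for the Chern--Ricci forms,
\[
\mathrm{Ric}(\omega_f)=\mathrm{Ric}(\omega_g)-n\,\sq\,\p\bp f.
\]
Then I would take the trace with respect to $\omega_f$. Since $(g_f)^{i\bar j}=e^{-f}g^{i\bar j}$, the trace operator rescales as $\tr_{\omega_f}=e^{-f}\tr_{\omega_g}$ on $(1,1)$-forms, so
\[
s_f=\tr_{\omega_f}\mathrm{Ric}(\omega_f)=e^{-f}\tr_{\omega_g}\bigl(\mathrm{Ric}(\omega_g)-n\,\sq\,\p\bp f\bigr)=e^{-f}\bigl(s_g-n\,\tr_{\omega_g}\sq\,\p\bp f\bigr).
\]
Recognizing $\tr_{\omega_g}\sq\,\p\bp f=\square_g f$ from the definition in the preliminaries gives exactly \eqref{2.3}.

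The only point requiring care is the bookkeeping of the two conformal factors: the coefficient $n$ in front of $\square_g f$ arises precisely because the determinant of the $n\times n$ Hermitian matrix absorbs $e^{nf}$, while the overall prefactor $e^{-f}$ comes from the inverse metric entering the trace. I do not expect any genuine obstacle here, since the entire argument is algebraic and local; the computation merely has to keep the two exponential contributions from being conflated.
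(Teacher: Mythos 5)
Your proof is correct, and it reaches \eqref{2.3} by a recognizably different computation than the paper's. The paper works at the level of the full Chern curvature tensor: it establishes the pointwise transformation law
\[
\lrp{\Theta_f}_{i\bar j k\bar\ell}=e^f\lrp{\Theta_{i\bar j k\bar\ell}-g_{k\bar\ell}\,\frac{\p^2 f}{\p z^i\p\bar z^j}},
\]
a computation in which the first-order cross terms involving $\p f\cdot \p g$ must be seen to cancel against the quadratic terms, and then contracts twice with $h_f^{i\bar j}=e^{-f}g^{i\bar j}$, the factor $-n\square_g f$ arising from $g^{k\bar\ell}g_{k\bar\ell}=n$. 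You instead pass through the first Chern--Ricci form $\mathrm{Ric}(\omega)=-\sq\,\p\bp\log\det(g_{i\bar j})$, where the conformal change is the purely additive shift $\log\det(g_f)=nf+\log\det(g)$, so the cancellation of first-order terms is automatic, and a single trace rescaling $\tr_{\omega_f}=e^{-f}\tr_{\omega_g}$ finishes the argument; this is shorter and less error-prone, while the paper's route records more information (the transformation of the entire curvature tensor, not just its trace). One point worth a sentence in your write-up: your argument implicitly identifies the Chern scalar curvature $s=g^{i\bar j}g^{k\bar\ell}\Theta_{i\bar jk\bar\ell}$ used in the paper with $\tr_{\omega}\mathrm{Ric}(\omega)$, which rests on the standard identity $g^{k\bar\ell}\Theta_{i\bar jk\bar\ell}=-\p_i\p_{\bar j}\log\det(g_{p\bar q})$; since on a non-K\"ahler manifold the various Ricci contractions of the Chern curvature differ, you should note that the scalar curvature here is the one obtained from the \emph{first} Chern--Ricci trace, for which this identity holds.
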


    \begin{proof} It follows from a straightforward computation.
        Let $\Theta_f$ be the Chern curvature of $\omega_f$. Then one can compute that
        \beq
        \lrp{\Theta_f}_{i\bar jk\bar\l}=e^f\lrp{\Theta_{i\bar jk\bar\l}-g_{k\bar\l}\frac{\p^2f}{\p z^i\p\bar z^j}}
        \eeq
        and
        \beq
        s_f=h_f^{i\bar j}h_f^{k\bar\l}\lrp{\Theta_f}_{i\bar jk\bar\l}=e^{-f}\lrp{s_g-n\square_g f}
        \eeq
        where $h^{i\bar j}_f =e^{-f}g^{i\bar j}$.
    \end{proof}

        \begin{lemma}
        Let $(M,\omega_g)$ be a compact Hermitian manifold and $\omega_f=e^f\omega_g$  for some $f\in C^\infty(M,\R)$. Let $\square_f:=\mathrm{tr}_{\omega_f}\sq\p\bar\p$, then for any $u\in C^\infty(M,\R)$,
        \beq\label{2.5}
        \square_fu=e^{-f}\square_g u,\quad \square_f^*u=e^{-nf}\square_g^*(e^{(n-1)f}u).
        \eeq
    \end{lemma}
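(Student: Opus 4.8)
The plan is to establish the two identities separately, since the first is purely algebraic (pointwise) while the second is the functional-analytic consequence of it. For the first identity, I would note that the conformal change $\omega_f=e^f\omega_g$ multiplies the Hermitian coefficients $g_{i\bar j}$ by $e^f$, so the inverse coefficients satisfy $(g_f)^{i\bar j}=e^{-f}g^{i\bar j}$. Since the trace of a $(1,1)$-form against $\omega_f$ is the contraction with $(g_f)^{i\bar j}$, and $\sq\p\bp u$ does not depend on the metric, we get
\beq
\square_f u=(g_f)^{i\bar j}\frac{\p^2 u}{\p z^i\p\bar z^j}=e^{-f}g^{i\bar j}\frac{\p^2 u}{\p z^i\p\bar z^j}=e^{-f}\square_g u,
\eeq
which is the first claimed formula. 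This is immediate and requires no real work.

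For the second identity, the essential observation is that the formal adjoint is defined through the $L^2$ pairing induced by the \emph{ambient} metric, so I must use the $\omega_f$-inner product on functions together with the transformed volume form $\tfrac{\omega_f^n}{n!}=e^{nf}\tfrac{\omega_g^n}{n!}$. Starting from the defining pairing for $\square_f^*$ and inserting the first identity, I would compute, for arbitrary $v\in C^\infty(M,\R)$,
\be
\left(\square_f v,\,u\right)_{\omega_f}
=\int_M (e^{-f}\square_g v)\,u\,e^{nf}\frac{\omega_g^n}{n!}
=\int_M (\square_g v)\,\big(e^{(n-1)f}u\big)\,\frac{\omega_g^n}{n!}.
\ee
By the definition of $\square_g^*$ with respect to $\omega_g$, the right-hand side equals $\int_M v\,\square_g^*\big(e^{(n-1)f}u\big)\,\tfrac{\omega_g^n}{n!}$, and reintroducing the factor $e^{nf}$ to recover the $\omega_f$-volume gives
\be
\left(\square_f v,\,u\right)_{\omega_f}
=\int_M v\,\Big(e^{-nf}\square_g^*\big(e^{(n-1)f}u\big)\Big)\,\frac{\omega_f^n}{n!}
=\Big(v,\,e^{-nf}\square_g^*\big(e^{(n-1)f}u\big)\Big)_{\omega_f}.
\ee
Since this holds for every $v$, I read off $\square_f^*u=e^{-nf}\square_g^*\big(e^{(n-1)f}u\big)$, as desired.

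There is no genuine obstacle here; the computation is a clean bookkeeping exercise. The only point requiring care is that the two conformal weights enter through different channels and must not be conflated: one factor $e^{-f}$ comes from the operator $\square_f$ itself (via the inverse metric), while the full factor $e^{nf}$ comes from the volume form $\tfrac{\omega_f^n}{n!}$, and it is precisely their interplay that produces the asymmetric weights $e^{-nf}$ and $e^{(n-1)f}$ in the final formula. I would emphasize in the writeup that $\square_f^*$ denotes the adjoint taken with respect to $\omega_f$ (not $\omega_g$), since otherwise the statement would be ambiguous.
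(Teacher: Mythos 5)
Your proposal is correct and follows essentially the same route as the paper: the pointwise identity $(g_f)^{i\bar j}=e^{-f}g^{i\bar j}$ for the first formula, and for the second the same duality computation pairing $\square_f v$ against $u$ with the volume form $\tfrac{\omega_f^n}{n!}=e^{nf}\tfrac{\omega_g^n}{n!}$, so that the weights $e^{-f}$ and $e^{nf}$ combine into $e^{(n-1)f}$ before transferring the adjoint to $\square_g^*$. Your added remark tracking the two conformal weights is a nice clarification but does not change the argument.
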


    \begin{proof} Indeed, we have
        \beq
        \square_fu=\mathrm{tr}_{\omega_f}\sq\p\bar\p u=e^{-f}\square_g u.
        \eeq
        For any $v\in C^\infty(M,\R)$,
        \be
        \int v\cdot \square_f^*u\cdot\frac{\omega_f^n}{n!}=\int u \cdot \square_f v\cdot\frac{\omega_f^n}{n!}
        &=&\int ue^{(n-1)f} \square_g v\cdot\frac{\omega_g^n}{n!}\\
        &=&\int ve^{-nf}\square_g^*(e^{(n-1)f}u)\cdot\frac{\omega_f^n}{n!}.
        \ee
    Hence, $\square_f^*u=e^{-nf}\square_g^*(e^{(n-1)f}u)$.  \end{proof}

\vskip 2\baselineskip

    \section{Proofs of main theorems}
    In this section, we prove Theorem \ref{thm1}, Theorem \ref{thm2} and Theorem \ref{thm3}.
    \begin{proof}[Proof of Theorem \ref{thm1}.] By using    Lemma \ref{lem2.5}, we have
    \beq
    C_p(e^{tf}\omega_g)=\lrp{\int e^{(n-p)tf}|s_g-nt\square_g f|^p\frac{\omega^n_g}{n!}}\lrp{\int e^{ntf}\frac{\omega_g^n}{n!}}^{-\frac{n-p}{n}}.
    \eeq
    When $p>1$, one can see clearly that $C_p(e^{tf}\omega_g)$ is differentiable with respect to $t$. A straightforward computation shows that
    \begin{align*}
    \frac{d}{dt} C_p(e^{tf}\omega_g)=A_1(t)+A_2(t)+A_3(t),
    \end{align*}
    where $A_1(t)$, $A_2(t)$ and $A_3(t)$ are given by
    \begin{align*}
    A_1(t)=&(n-p)\lrp{\int fe^{(n-p)tf}\left|s_g-nt\square_g f\right|^p\frac{\omega^n_g}{n!}}\lrp{\int e^{ntf}\frac{\omega_g^n}{n!}}^{-\frac{n-p}{n}},\\
    A_2(t)=&-np\lrp{\int e^{(n-p)tf}\cdot \square_g f\cdot\left(s_g-nt\square_g f\right) \cdot \left|s_g-nt\square_g f\right|^{p-2}\frac{\omega^n_g}{n!}}\lrp{\int e^{ntf}\frac{\omega_g^n}{n!}}^{-\frac{n-p}{n}},\\
    A_3(t)=&-(n-p)\lrp{\int e^{(n-p)tf}\left|s_g-nt\square_g f\right|^p\frac{\omega^n_g}{n!}}\lrp{\int fe^{ntf}\frac{\omega_g^n}{n!}}\lrp{\int e^{ntf}\frac{\omega_g^n}{n!}}^{-\frac{n-p}{n}-1}.
    \end{align*}
    When $t=0$, one has
    \begin{align*}
    A_1(0)=&(n-p)\lrp{\int f|s_g|^p\frac{\omega^n_g}{n!}}\lrp{\int\frac{\omega_g^n}{n!}}^{-\frac{n-p}{n}},\\
    A_2(0)=&-np\lrp{\int \square_g f\cdot s_g\cdot |s_g|^{p-2}\frac{\omega^n_g}{n!}}\lrp{\int\frac{\omega_g^n}{n!}}^{-\frac{n-p}{n}},\\
    A_3(0)=&-(n-p)\lrp{\int|s_g|^p\frac{\omega^n_g}{n!}}\lrp{\int f\frac{\omega_g^n}{n!}}\lrp{\int\frac{\omega_g^n}{n!}}^{-\frac{n-p}{n}-1}\\
    =&-(n-p)\lrp{\intbar|s_g|^p\frac{\omega^n_g}{n!}}\lrp{\int f\frac{\omega_g^n}{n!}}\lrp{\int\frac{\omega_g^n}{n!}}^{-\frac{n-p}{n}}.
    \end{align*}
Therefore one obtains
    \begin{align*}
    \pdd C_p(e^{tf}\omega_g)=&\lrp{\int F\frac{\omega^n_g}{n!}}\lrp{\int \frac{\omega_g^n}{n!}}^{-\frac{n-p}{n}},
    \end{align*}
    where
    \beq F= (n-p)f\lrp{|s_g|^p-\intbar|s_g|^p\frac{\omega^n_g}{n!}}-np\square_g f\cdot s_g\cdot |s_g|^{p-2}.\eeq
    Since $p>1$, the function $s_g\cdot |s_g|^{p-2}$ is continuous on $M$. Hence, we obtain the Euler-Lagrange equation in the sense of distributions
    \beq
    \square_g^*\left(s_g|s_g|^{p-2}\right)=\frac{n-p}{np}\left(|s_g|^p-\intbar|s_g|^p\frac{\omega_g^n}{n!}\right).
    \eeq
        This completes the proof of Theorem \ref{thm1}.
\end{proof}

\vskip 1\baselineskip

    \begin{proof}[Proof of Theorem \ref{thm2}.]  Let $[\omega_g]$ be the conformal class of $\omega_g$.  It is well-known that there exists a unique (up to scaling) Gauduchon metric $\omega_G\in[\omega_g]$, i.e., $\p\bp\omega_G^{n-1}=0$. Let $\omega_f=e^f\omega_G$ for  $f\in C^\infty(M,\R)$. Let $\square_G =\tr_{\omega_G}\sq\p\bar\p$ and $\square_f =\tr_{\omega_f}\sq\p\bar\p$. Let $s_G$ be the Chern scalar curvature of $\omega_G$ and $s_f$ be the Chern scalar curvature of $\omega_f$.     By Theorem \ref{thm1},  $\omega_f$ is $n$-conformal extremal if and only if
        \beq\label{3.4}
            \square_f^*(s_f|s_f|^{n-2})=0.
        \eeq We shall show that there exists a unique (up to additive constants) function $f\in C^\infty(M,\R)$ satisfying (\ref{3.4}). Indeed, by using formula
    (\ref{2.5}), we obtain
        \beq\label{3.51}
        \square_f^*\left(s_f|s_f|^{n-2}\right)=e^{-nf}\square_G^*\left(e^{(n-1)f}s_f|s_f|^{n-2}\right).
        \eeq
        By \eqref{2.3}, one has $s_f=e^{-f}\lrp{s_G-n\square_G f}$ and so
        \beq\label{3.5}
        \square_f^*\left(s_f|s_f|^{n-2}\right)=e^{-nf}\square_G^*\left(\left(s_G-n\square_Gf\right)\left|s_G-n\square_Gf\right|^{n-2}\right)=0.
        \eeq
 When $\omega_G$ is Gauduchon,  by formula \eqref{adjoint}, equation \eqref{3.5} is equivalent to
 \beq \left(\Delta_{d,G}+\square_G\right)\left(\left(s_G-n\square_Gf\right)\left|s_G-n\square_Gf\right|^{n-2}\right)=0. \eeq
 Let $\phi= \left(s_G-n\square_Gf\right)\left|s_G-n\square_Gf\right|^{n-2}.$
 We shall show that $\left(\Delta_{d,G}+\square_G\right)\phi=0$ if and only if $\phi$ is a constant. Indeed, one can see clearly that
 \be \int\phi \square_G\phi \frac{\omega_G^{n}}{n!}&=&\frac{1}{2}\left(\int\sq \p\bp\phi^2\wedge  \frac{\omega_G^{n-1}}{(n-1)!}-\int 2\sq \p\phi\wedge\bp\phi\wedge \frac{\omega_G^{n-1}}{(n-1)!} \right)\\&=&-\int |\p\phi|^2 \frac{\omega_G^{n}}{n!}. \ee
    Hence
\beq 0= \int\phi  \left(\Delta_{d,G}\phi+\square_G\phi\right)  \frac{\omega_G^{n}}{n!}=\int |\bp\phi|^2 \frac{\omega_G^{n}}{n!}.\eeq
    Therefore, $\phi$ is constant and there exists some  $C\in\R$ such that
        \beq\label{3.6}
            s_G-n\square_Gf=C.
        \eeq
    Since $\omega_G$ is Gauduchon,  it is easy to see that
        \beq C=\intbar s_G  \frac{\omega_G^{n}}{n!}.\eeq
    Now equation (\ref{3.6}) can be written as
        \begin{align}\label{3.8}
            \square_Gf=\psi,\quad\text{with}\quad\psi=\frac1n\lrp{s_G-C}.
        \end{align}
        Since $\int\psi\cdot\omega_G^n/n!=0$, equation  (\ref{3.8}) has a unique solution (up to additive constants) $f\in C^\infty(M,\R)$. Thereofore, there exists a unique (up to scaling) $n$-conformal extremal metric $\omega_f=e^f\omega_G$ in the conformal class $[\omega_g]$.\\

         On the other hand, it is easy to compute that
        \beq
            \frac{d^2}{dt^2}C_n(e^{tf}\omega_g)=n^3(n-1)\int \left(\square_g f\right)^2\left|s_g-nt\square_g f\right|^{n-2}\frac{\omega_g^n}{n!}\geq 0.
        \eeq
        Hence, if $\omega_E$ is a critical point of the functional
        $C_n$, then it is a minimizer.
    \end{proof}

\vskip 1\baselineskip

    \begin{proof}[Proof of Theorem \ref{thm3}.]   Let $s_E$ be the Chern scalar curvature of  the $n$-conformal extremal metric $\omega_E$.
        \bd \item  If $\omega_E$ is Gauduchon, by using formula \eqref{adjoint} equation \eqref{nextremal} is reduced to
        \beq \left(\Delta_{d,E}+\square_E\right)\left(s_E|s_E|^{n-2}\right)=0.\eeq
    By using a similar maximum principle as shown in the proof of Theorem \ref{thm1},   $s_E|s_E|^{n-2}$ is constant and so $s_E$ is constant.

    \item If $s_E$ has constant nonzero Chern scalar curvature, then by formula \eqref{adjoint} $$0=\square_E^*\left(s_E|s_E|^{n-2}\right)=-\left(\sq\bp^*\p^*\omega_E\right)\cdot \left(s_E|s_E|^{n-2}\right),$$
    where  dual operators are taken with respect to $\omega_E$.
     Hence, $\sq\bp^*\p^*\omega_E=0$, which is equivalent to  $\p\bp\omega^{n-1}_E=0$, i.e.,  $\omega_E$ is Gauduchon. \ed
     This completes the proof of Theorem \ref{thm3}.
    \end{proof}

Let $(M,\omega_g)$ be a compact Hermitian manifold. We denote $\mathscr F(\omega_g)$ by the \textit{total scalar curvature} of $\omega_g$, i.e.,
\beq
\mathscr F(\omega_g)=\int s_g\omega_g^n.
\eeq

\bcorollary Let $(M,\omega_g)$ be a compact Hermitian manifold. Suppose that $\omega_G$ and $\omega_E$ are the Gauduchon metric and  $n$-conformal extremal metric  in the conformal class of $\omega_g$ respectively.
\bd \item $\omega_E$ has positive Chern scalar curvature if and only if  $\mathscr F(\omega_G)>0$;
\item $\omega_E$ has negative Chern scalar curvature if and only if  $\mathscr F(\omega_G)<0$;
\item $\omega_E$ has  zero Chern scalar curvature if and only if $\mathscr F(\omega_G)=0$.
\ed
\ecorollary

    \begin{proof}
        From the proof of Theorem \ref{thm1}, we know that $\omega_E=e^f\omega_G$ where $\omega_G$ is the Gauduchon metric and $f\in C^\infty(M,\R)$ satisfies
        \begin{align}
            \square_Gf=\vphi,\quad\text{with}\quad\vphi=\frac1n\lrp{s_G-\frac{\mathscr F(\omega_G)}{\int\omega_G^n}}.
        \end{align}
        Thus, the Chern scalar $s_E$ of $\omega_E$ satisfies
        \beq
            s_E=e^{-f}(s_G-n\square_G f)=\frac{e^{-f}}{\int\omega_G^n}\cdot \mathscr F(\omega_G).
        \eeq
        Hence, the conclusions follow.
    \end{proof}

\vskip 2\baselineskip

\section{Some consequences of $p$-conformal extremal metrics}
In this section we prove Theorem \ref{thm4} and Corollary \ref{cor1}.

    \begin{proof}[Proof of Theorem \ref{thm4}.]
         Recall that a $p$-conformal extremal metric  satisfies the following equation in the sense of distributions:
         \beq   \square_p^*\left(s_p|s_p|^{p-2}\right)=\frac{n-p}{np}\left(|s_p|^p-\intbar|s_p|^p\frac{\omega_p^n}{n!}\right)\eeq
        where $\square_p^*$ is the formal adjoint of $\square_p=\mathrm{tr}_{\omega_p}\sq\p\bp$ with respect to $\omega_p$. Multipling by $|s_p|^p$ and integrating, one has
    \beq \frac{n-p}{np}\int\lrp{|s_p|^p-\intbar|s_p|^p\frac{\omega_p^n}{n!}}^2\frac{\omega_p^n}{n!}=\int|s_p|^p\square_p^*\left(s_p|s_p|^{p-2}\right)\frac{\omega_p^n}{n!}, \label{4.1}\eeq
    and the right hand reads
\beq \int|s_p|^p\square_p^*\lrp{s_p|s_p|^{p-2}}\frac{\omega_p^n}{n!}=\int \square_p\left(|s_p|^p\right)\lrp{s_p|s_p|^{p-2}}\frac{\omega_p^n}{n!}.\eeq
A straightforward computation shows
    \beq \int|s_p|^p\square_p^*\lrp{s_p|s_p|^{p-2}}\frac{\omega_p^n}{n!}
            =p\int\lrp{|s_p|^{2p-2}\square_ps_p+(p-1)s_p|s_p|^{2p-4}|\p s_p|^2}\frac{\omega_p^n}{n!}.\label{4.2}
    \eeq

    \noindent For part $(1)$,  if $p=n$, it follows from Theorem \ref{thm3}. When $p\neq n$, since $\omega_p$ is Gauduchon, one has
    \beq\int\square_p\lrp{s_p|s_p|^{2p-2}}\frac{\omega_p^n}{n!}=0.\eeq
    On the other hand, we compute that  $$
        \int\square_p\lrp{s_p|s_p|^{2p-2}}\frac{\omega_p^n}{n!}=(2p-1)\int\lrp{|s_p|^{2p-2}\square_ps_p+2(p-1)s_p|s_p|^{2p-4}|\p s_p|^2}\frac{\omega_p^n}{n!}.
    $$
    In particular, we have
    \beq \int |s_p|^{2p-2}\square_ps_p\frac{\omega_p^n}{n!}=-2(p-1)\int s_p|s_p|^{2p-4}|\p s_p|^2\frac{\omega_p^n}{n!}.\eeq
        \noindent
    By  \eqref{4.1} and \eqref{4.2}, we obtain
        \be
            \frac{(n-p)^2}{np}\int\lrp{|s_p|^p-\intbar|s_p|^p\frac{\omega_p^n}{n!}}^2\frac{\omega_p^n}{n!}=-p(p-1)(n-p)\int s_p|s_p|^{2p-2}|\p s_p|^2\frac{\omega_p^n}{n!}.
        \ee
        Since $(n-p)s_p\geq 0$, we deduce that the right hand side is non-positive. Hence $s_p$ is a constant.\\

    \noindent   For $(2)$, if $s_p$ has constant nonzero Chern scalar curvature, then \eqref{pextremal} is reduced to
    \beq \square_p^*\left(s_p|s_p|^{p-2}\right)=0 \eeq
    and by  formula \eqref{adjoint}, we also have  $$\square_p^*\left(s_p|s_p|^{p-2}\right)=-\left(\sq\bp^*\p^*\omega_p\right)\cdot \left(s_p|s_p|^{p-2}\right),$$
    where  dual operators are taken with respect to $\omega_p$. Hence, $\sq\bp^*\p^*\omega_p=0$, i.e.,  $\omega_p$ is Gauduchon.
    \end{proof}

\vskip 1\baselineskip

    \begin{proof}[Proof of Corollary \ref{cor1}.] If $\omega_g$ is Gauduchon and  $s_g$ is constant, then for any $p>1$,
    \be
    \square_g^*\left(s_g|s_g|^{p-2}\right)=\frac{n-p}{np}\left(|s_g|^p-\intbar|s_g|^p\frac{\omega_g^n}{n!}\right)=0.
    \ee
    Hence $\omega_g$ is a $p$-conformal extremal metric.
    \end{proof}

\vskip 2\baselineskip


\end{document}